\numberwithin{equation}{section}
\theoremstyle{plain}
\newtheorem{theorem}[equation]{Theorem}
\newtheorem{lemma}[equation]{Lemma}
\newtheorem{cor}[equation]{Corollary}
\newtheorem{conj}[equation]{Conjecture}
\newcounter{intro}
\theoremstyle{definition}
\newtheorem{definition}[equation]{Definition}
\newtheorem{remark}[equation]{Remark}
\DeclareMathOperator{\Spec}{Spec}
\DeclareMathOperator{\Pic}{Pic}
\DeclareMathOperator{\Div}{Div}
\DeclareMathOperator{\Prin}{Pr}
\DeclareMathOperator{\Mod}{mod}
\DeclareMathOperator{\Int}{int}
\DeclareMathOperator{\Nef}{nef}
\renewcommand{\div}{\text{div}}
\newcommand{\R}{\mathbb{R}}
\newcommand{\Z}{\mathbb{Z}}
\newcommand{\Q}{\mathbb{Q}}
\newcommand{\cA}{\mathcal A}
\newcommand{\cAo}{\mathcal{A}^{\circ}}
\newcommand{\cB}{\mathcal{B}}
\newcommand{\cD}{\mathcal{D}}
\newcommand{\ocD}{\overline{\mathcal{D}}}
\newcommand{\cE}{\mathcal{E}}
\newcommand{\cX}{\mathcal{X}}
\newcommand{\cU}{\mathcal{U}}
\newcommand{\cV}{\mathcal{V}}
\newcommand{\cW}{\mathcal{W}}
\newcommand{\cY}{\mathcal{Y}}
\newcommand{\oM}{\overline{M}}
\newcommand{\cL}{\mathcal{L}}
\newcommand{\oL}{\overline L}
\newcommand{\ocL}{\overline{\mathcal{L}}}
\newcommand{\vh}{\mathfrak h}
\begin{document}

\title{Specialization of canonical heights on abelian varieties}

\author{Alexander Carney}

\date{\today}

\maketitle
\begin{abstract}
Given a family of abelian varieties over a quasiprojective smooth curve $T^0$ over a global field and a point $P$ on the generic fiber, we show that the N\'eron-Tate canonical height $h_{X_t}(P_t)$ of $P_t$ along each fiber is exactly equal to a Weil height $h_{\oM}(t)$ given by an adelic metrized line bundle $\oM$ on the unique smooth projective curve $T$ containing $T^0$. As a consequence, 
we show that a conjecture of Zhang on the finiteness of small-height specializations of $P$ is equivalent to $\oM$ being big.



\end{abstract}

\section{Introduction}

Let $K$ be a number field or a transcendence degree one function field of any characteristic, and let $T^0$ be a smooth, quasiprojective curve over $K$. Suppose we have an abelian scheme $A\to T^0$, so that every fiber $A_t$ is an abelian variety. By fixing a line bundle on $A$ which restricts to a symmetric and ample line bundle on each fiber, we may specify the N\'eron-Tate height $h_{A_t}$ on each fiber in a consistent way.





Fix a section $P:T^0\to A$ with specializations $P_t$. If we let $F=K(T^0)$ be the function field of $T$, this corresponds to an $F$ point on the generic fiber $A_{\eta}$.
Following a line of study begun by Silverman and Tate, we ask how the canonical height $h_{A_t}(P_t)$ varies as we vary the parameter $t$ along $T^0(\overline K)$. Let $T$ be the unique smooth projective curve containing $T^0$, and let $h_T(t)$ be any height on $T$ corresponding to a degree one line bundle on $T$. The general formulation of several theorems and conjectures is that 
\[
h_{A_t}(P_t)=h_A(P)h_T(t)+(\text{error term}),
\]
Where $h_A$ is the N\'eron-Tate canonical height on $A_{\eta}$.

Silverman conjectured~\cite{SilvermanThesis}, and Tate proved~\cite[Main Theorem]{Tate83} that if $A$ is a family of elliptic curves, a divisor $D_P$ can be found on $T$ so that replacing $h_T$ with $h_{D_P}$ reduces the error term to $O(1)$. Call~\cite{Call87} and~\cite{Green89} prove similar results in higher dimension. A series of subsequent results of Silverman culminating in~\cite{Silverman94} further classify the nature of the bounded error function for families of elliptic curves, and work of Biesel, Holmes, and de Jong~\cite{deJong} makes similar refinements in higher dimension. More recently, DeMarco and Mavraki \cite[Theorem 1.1]{DeMM} showed how to eliminate the error term entirely for families of elliptic curves by replacing $h_T$ with a specific height constructed using adelic metrized line bundles. Here, we generalize their result to any dimension, resolving this question in full for abelian varieties.

\begin{theorem}\label{abelianvarieties}
Let $P\in A(F)$ such that $h_{A}(P)\ne 0$. The function
$$t\mapsto h_{A_t}(P_t),$$
defined for all but finitely many $t\in T$, can be extended to a height function $h_{\oM}$ on all of $T$ which is given by a nef adelic line bundle $\oM$ on $T$ whose underlying line bundle $M$ is ample. 
\end{theorem}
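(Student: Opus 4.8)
\emph{Overall plan.} The plan is to build $\oM$ one place of $K$ at a time. For a place $v$ of $K$ the N\'eron--Tate height on a fibre decomposes as a sum $\sum_{w}\widehat\lambda_{A_t,w}(P_t)$ of canonical local N\'eron functions over the places $w$ of the residue field $k(t)$; collecting those $w$ lying over $v$ produces a function of $t\in T(\overline K)$ which I claim is the $v$-adic local height, with respect to a continuous semipositive metric $\|\cdot\|_v$ on a line bundle $M$ on $T$, of the point $t$. Summing over $v$ then returns $\sum_w\widehat\lambda_{A_t,w}(P_t)=h_{A_t}(P_t)$, so $h_{\oM}(t)=h_{A_t}(P_t)$ wherever the right side is defined, and the construction produces at the same time the continuous extension across the finitely many punctures of $T\smallsetminus T^0$.

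\emph{The $v$-adic metric over $T^0$.} Fix $v$ and work over $T^0_v$ (the Berkovich analytification, or merely the set of $\overline{K_v}$-points). Choose a projective model $\overline A\to T^0$ of the family together with an extension of $L$, so that on each fibre the local Weil height $\lambda_{L_t,v}$ is the one attached to the model. Tate's limit $\tfrac1{4^n}\lambda_{L_t,v}([2^n]P_t)\to\widehat\lambda_{A_t,v}(P_t)$ exists, and the key local input is that it converges \emph{uniformly} on $v$-adic compacta of $T^0_v$: the function $\lambda_{L,v}([2]Q)-4\lambda_{L,v}(Q)$ on $\overline A$ is bounded, uniformly over such compacta, by properness of $\overline A\to T^0$. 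Hence $t\mapsto\widehat\lambda_{A_t,v}(P_t)$ is continuous and bounded relative to the model, defining a continuous metric on $M|_{T^0_v}$, where $M|_{T^0}$ is the canonical extension of $(P^*L)|_{T^0}$. Since each partial sum is a model function and the convergence is uniform, the limiting metric is a uniform limit of semipositive model metrics, hence semipositive; so $\oM$ will be nef.

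\emph{Boundary, coherence, and the height identity.} Next I would extend $\|\cdot\|_v$ across each $x\in T\smallsetminus T^0$ and check coherence of $\{\|\cdot\|_v\}_v$. Over the local ring at $x$ the abelian variety $A_\eta/F$ has a N\'eron model, semiabelian after a finite base change, and $\widehat\lambda_{A_t,v}(P_t)$ then acquires at worst a logarithmic singularity as $t\to x$ whose leading coefficient is a rational number determined by the degeneration (the order of the minimal discriminant together with the position of $P$ on the special fibre, in the elliptic case of DeMarco--Mavraki; an intersection number on a semiabelian model in general, via the biextension / N\'eron local pairing formalism). Twisting $M$ by the divisor carrying these coefficients makes every $\|\cdot\|_v$ extend continuously and semipositively across $x$ and pins down $M$ as an honest line bundle on all of $T$; coherence is then routine, since for all but finitely many $v$ the family, the model, the section $P$, and the boundary data all have good reduction and the metrics are cut out by a single integral model. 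Thus $\oM=(M,\{\|\cdot\|_v\}_v)$ is an adelic metrized line bundle on $T$, nef by the previous step, with $h_{\oM}(t)=h_{A_t}(P_t)$ by summing local heights.

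\emph{Ampleness, and the main obstacle.} On the smooth projective curve $T$, $M$ is ample iff $\deg_T M>0$, and nefness gives $\deg_T M\ge0$. To rule out $\deg_T M=0$ I would identify $\deg_T M$, up to a positive rational constant, with the canonical height of $P$ over the function field $\overline K(T)$ --- most cleanly, by showing $M$ agrees up to degree with $\mathcal P^*\overline{\mathcal L}$ for the canonical cubical extension $\overline{\mathcal L}$ on a semiabelian model, so that $\deg_T M$ is a positive multiple of $h_A(P)$ (alternatively, via a Silverman--Call specialization limit $h_{A_t}(P_t)/h_T(t)\to\deg_T M$). Since $h_A(P)\ne0$ by hypothesis, $\deg_T M>0$ and $M$ is ample. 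The main obstacle throughout is the boundary analysis: over $T^0$ the argument is merely a place-by-place relative version of Tate's telescoping, but extending the canonical local height continuously and semipositively across an arbitrary semiabelian degeneration, and checking that the logarithmic coefficients assemble into a genuine line bundle with the correct degree, is where the fine structure of N\'eron models and non-archimedean local intersection theory --- and the bulk of the work --- is needed.
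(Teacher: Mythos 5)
Your proposal follows the DeMarco--Mavraki strategy (place-by-place construction of canonical local metrics via Tate's telescoping over $T^0_v$, then extension across $T\smallsetminus T^0$), which is precisely the route the paper does \emph{not} take, and the step you defer --- the boundary analysis --- is not a technical loose end but the entire difficulty of the theorem in dimension $>1$. Concretely, you assert that $\widehat\lambda_{A_t,v}(P_t)$ has at worst a logarithmic singularity at each $x\in T\smallsetminus T^0$ with rational leading coefficient read off from a semiabelian model, and that after twisting $M$ by the resulting divisor every $\|\cdot\|_v$ extends continuously and semipositively, with coherence and the degree identity $\deg_T M = h_A(P)$ following. For elliptic curves this is exactly the content of Silverman's variation papers and is what DeMarco--Mavraki's proof rests on; in higher dimension no such asymptotic expansion is supplied by your argument, and it is known to be delicate: at archimedean places it amounts to controlling the degeneration of the biextension (N\'eron) pairing in a degenerating family (Lear-type extension results), and the ``height jump'' phenomenon studied by Biesel--Holmes--de Jong --- cited in the paper --- shows that the naive expectation about how these local pairings extend over the boundary can fail. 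Your uniform-convergence argument over $v$-adic compacta of $T^0_v$ is fine, but it says nothing about behavior near the punctures, so continuity, semipositivity at the boundary, the adelic coherence of the family $\{\|\cdot\|_v\}$, and the computation of $\deg_T M$ all rest on an unproved (and hard) local input. As written, the proof has a genuine gap at its central step.

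For comparison, the paper avoids all local asymptotics. It passes to the connected N\'eron model $\cAo$ over the projective curve $T$ (after a finite extension giving semiabelian reduction, and after replacing $P$ by $nP$ so the section lands in $\cAo$), and uses Green's theorem-of-the-cube argument to extend the polarization \emph{exactly}: $[n]^*\cL_0|_{\cAo}=n^2\cL_0|_{\cAo}$, with no boundary discrepancy. Running Tate's limiting process at the level of Yuan--Zhang adelic line bundles over quasi-projective varieties then yields $\oL$ with $[2]^*\oL=4\oL$ whose model line bundles all agree on $\cAo$, so the pullback $\oM=P^*\oL$ is a well-defined nef adelic line bundle on all of $T$ by pure functoriality; the fiberwise identity $\vh_{\oM}(t)=\vh_{A_t}(P_t)$ and $\deg M=h_A(P)\ne0$ (hence ampleness) are then immediate. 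If you want to salvage your approach, you would need to prove the higher-dimensional analogues of Silverman's local variation results at every place, which is a substantial project in itself rather than a routine verification.
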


This is proved by a completely different method from that of DeMarco and Mavraki. We use Yuan-Zhang's theory of adelic line bundles and vector heights~\cite{yz21} to construct a canonical adelic line bundle, essentially extending Tate's limiting method from $\R$-valued height functions to the underlying geometry. Once this theory is established, the existence of a N\'eron model for $A$ provides the needed global geometric setting on which to complete the argument.

\begin{remark}
By the Lang-N\'eron Theorem (see~\cite{conradtrace}), if $A_{\eta}$ has trivial $F/K$-trace, the condition that $h_A(P)\ne0$ is equivalent to $P$ being non-torsion. In general, it's equivalent to $P$ being outside a torsion coset of Tr$_{F/K}(A_{\eta})$. 
\end{remark}

With a metrized line bundle giving $h_{A_t}(P_t)$ exactly, one could use equidistribution arguments to show, for example, that if $P,Q\in A(F)$ are two points whose heights are both non-zero and there exists a sequence $\{t_n\}\subset T(\overline \Q)$ such that both $h_{A_{t_n}}(P_{t_n})$ and $h_{A_{t_n}}(Q_{t_n})$ tend to zero as $n\to\infty$, then $P$ and $Q$ specialize to torsion points at the exact same places. This is likely vacuous, however, due to the following conjecture of Zhang.

\begin{conj}(\cite{ZhangConj})
Suppose $A\to T^0$ is a non-isotrivial family of abelian varieties as above, with $K$ a number field, and that $A_{\eta}$ is simple of dimension at least two. For each non-torsion section $P:T^0\to A$ defined over $\overline\Q$, there exists an $\epsilon>0$ such that 
\[
\{t\in T^0(\overline\Q): h_{A_t}(P_t)\le\epsilon\}
\]
is finite. 
\end{conj}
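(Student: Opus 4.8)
The plan is to realize $t\mapsto h_{A_t}(P_t)$ as the pullback along $P$ of a single canonical adelic line bundle on the total space of a N\'eron model, obtained by transplanting Tate's limiting argument from $\R$-valued height functions into Yuan--Zhang's category of adelic line bundles. First, let $\mathcal A\to T$ be the N\'eron model of $A_\eta/F$ over the smooth projective curve $T/K$; it is an abelian scheme over a dense open $T^1\subseteq T$ containing all but finitely many points of $T^0$, with semiabelian fibres elsewhere, and by the N\'eron mapping property $P$ extends uniquely to a section $\mathcal P\colon T\to\mathcal A$ with $\mathcal A_t=A_t$ and $\mathcal P(t)=P_t$ for $t\in T^1(\overline K)$. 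Replacing the fibrewise bundle by $\tfrac12\bigl(\mathcal L+[-1]^*\mathcal L\bigr)$, assume $\mathcal L|_{A_\eta}$ symmetric, so $[n]^*\mathcal L\cong\mathcal L^{\otimes n^2}$ on $A_\eta$.

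The heart of the argument is to produce a canonical adelic line bundle $\overline{\mathcal L}$ on $\mathcal A$ whose restriction to each good fibre is the canonically metrized bundle of that abelian variety. Fix any adelic line bundle $\overline{\mathcal L}_0$ on $\mathcal A$ (over $K$, or over $\overline K$) whose underlying bundle restricts to $\mathcal L|_{A_\eta}$ on the generic fibre; using rigidified/cubical line bundles on the abelian scheme $\mathcal A|_{T^1}$ one may even arrange $[n]^*\mathcal L_0\cong\mathcal L_0^{\otimes n^2}$ over $T^1$, so that the discrepancy $\overline{\mathcal E}_n:=\tfrac1{n^2}[n]^*\overline{\mathcal L}_0-\overline{\mathcal L}_0$ is bounded and concentrated on the finitely many bad fibres together with the metric defect. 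The telescoping identity $\tfrac1{n^{2k}}[n^k]^*\overline{\mathcal L}_0-\overline{\mathcal L}_0=\sum_{0\le j<k}\tfrac1{n^{2j}}[n^j]^*\overline{\mathcal E}_n$ then shows $\tfrac1{n^{2k}}[n^k]^*\overline{\mathcal L}_0$ is Cauchy in the appropriate completion of $\widehat{\Pic}(\mathcal A)_\R$; its limit $\overline{\mathcal L}$ is independent of the choices and satisfies $[n]^*\overline{\mathcal L}=n^2\overline{\mathcal L}$. This is precisely Tate's averaging procedure, now run on $\widehat{\Pic}(\mathcal A)_\R$ rather than on $\R$, and it is where the formalism of vector heights does the work; running the same limit on a good fibre recovers the classical canonical metric, so $\overline{\mathcal L}_t:=\overline{\mathcal L}|_{\mathcal A_t}$ is the canonically metrized bundle of $A_t$ for $t\in T^1(\overline K)$. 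Moreover $\overline{\mathcal L}$ is nef, since at archimedean places its metric is translation-invariant, hence of semipositive curvature, and at non-archimedean places the N\'eron model supplies a nef integral model, and nefness survives the limit.

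Now descend to $T$: put $\overline M:=\mathcal P^*\overline{\mathcal L}\in\widehat{\Pic}(T)_\R$, which is nef because pullback preserves nefness, and whose underlying line bundle is $M=\mathcal P^*\mathcal L$. Functoriality of heights gives, for every $t\in T^1(\overline K)$,
\[
h_{\overline M}(t)=h_{\overline{\mathcal L}}\bigl(\mathcal P(t)\bigr)=h_{\overline{\mathcal L}_t}(P_t)=h_{A_t}(P_t),
\]
so $h_{\overline M}$ extends the given function to all of $T$. It remains to see $M$ is ample, i.e. $\deg_T M>0$. Nefness already gives $\deg_T M\ge 0$; if $\deg_T M=0$, then $\overline M$ is a nef adelic line bundle with numerically trivial underlying bundle, so $h_{\overline M}$ is bounded on $T(\overline K)$, whereas $h_A(P)\ne 0$ forces (by the Lang--N\'eron remark) $P$ to lie in no torsion coset of $\Tr_{F/K}(A_\eta)$, and the classical estimate $h_{A_t}(P_t)=h_A(P)\,h_T(t)+o\bigl(h_T(t)\bigr)$ of Tate, Call, and Green makes $t\mapsto h_{A_t}(P_t)$ unbounded --- a contradiction. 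Hence $\deg_T M>0$ and $M$ is ample.

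The main obstacle is the middle step: making the Tate limit converge \emph{as an adelic line bundle on the total space} $\mathcal A$, which is neither proper nor an abelian scheme over the bad locus --- so both the relation $[n]^*\mathcal L\cong\mathcal L^{\otimes n^2}$ and the available positive models need care there --- and verifying that the limit is a genuine nef object in Yuan--Zhang's sense over a non-projective base. Once that theory is in hand, the remaining steps are formal, resting only on the existence of the N\'eron model, functoriality of heights, and the classical specialization estimates.
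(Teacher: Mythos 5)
There is a fundamental mismatch between what you prove and what the statement asserts. The statement is Zhang's conjecture: finiteness of $\{t\in T^0(\overline\Q):h_{A_t}(P_t)\le\epsilon\}$ for some $\epsilon>0$. The paper does not prove this (it is quoted as an open conjecture); its contribution is Theorem~\ref{abelianvarieties}, the exact identity $h_{A_t}(P_t)=h_{\oM}(t)$ for a nef adelic line bundle $\oM$ with ample underlying bundle $M$, and Corollary~\ref{Bigness}, which shows the conjecture is \emph{equivalent} to the strict positivity $\oM^2>0$. Your proposal constructs exactly the objects of Theorem~\ref{abelianvarieties} (N\'eron model, symmetrization, cube-theorem/cubical extension of the polarization, Tate's limit run inside Yuan--Zhang's $\widehat\Pic$, pullback along the section) and stops at ``$M$ is ample, $\oM$ is nef.'' No step addresses finiteness at all, and no formal step could: nefness of $\oM$ together with $\deg M>0$ only gives $\oM^2\ge0$, and by Zhang's essential inequalities on the curve $T$ the finiteness statement is equivalent to $\oM^2>0$ (bigness), which is precisely the open content of the conjecture. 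So the proposal, even granting every analytic claim in its middle step, proves the paper's Theorem~\ref{abelianvarieties} rather than the stated conjecture, and the gap between the two is not a technicality but the entire remaining problem.

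Two smaller points. First, the constructive part of your argument is essentially the paper's proof of Theorem~\ref{abelianvarieties}, so if that had been the assigned statement the approach would be correct in outline (the paper works with $\cA^{\circ}$ and replaces $P$ by $nP$ to land in the identity component, and obtains $[n]^*\cL_0|_{\cA^{\circ}}=n^2\cL_0|_{\cA^{\circ}}$ via the theorem of the cube plus triviality along the zero section, before invoking Lemma~\ref{tateslimiting}). Second, your ampleness argument contains an error: a nef adelic line bundle whose underlying bundle has degree zero need not have bounded height function (degree-zero classes on a curve contribute $O(\sqrt{h_T})$, not $O(1)$); the paper instead identifies $\deg M$ directly with the geometric canonical height $h_A(P)\ne0$, which is both simpler and correct. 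Also note the hypothesis should be $h_A(P)\ne0$ rather than merely ``non-torsion'' in general, though under the conjecture's hypotheses (simple, non-isotrivial, hence trivial trace) these agree by Lang--N\'eron.
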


Instead, using Theorem~\ref{abelianvarieties}, we can restate Zhang's conjecture as a positivity statement for self-intersections.

\begin{cor}\label{Bigness}
Let $\oM$ be the metrized line bundle defined in Theorem~\ref{abelianvarieties} so that $h_{A_t}(P_t)=h_{\oM}(t)$. Then the conclusion of Zhang's conjecture is equivalent to 
\[
\oM^2>0.
\]
\end{cor}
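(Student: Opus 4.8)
The plan is to deduce Corollary~\ref{Bigness} from Theorem~\ref{abelianvarieties} by interpreting both sides in terms of the adelic arithmetic intersection theory attached to $\oM$ on the projective curve $T$. Since $\oM$ is a nef adelic line bundle whose underlying $M$ is ample, the self-intersection number $\oM^2$ is a well-defined nonnegative real number (nefness forces $\oM^2 \ge 0$), and by the arithmetic Hilbert--Samuel / volume formula for adelic line bundles on arithmetic curves, $\oM$ is big precisely when $\oM^2 > 0$. So the corollary amounts to showing: $\oM$ is big $\iff$ the set $\{t\in T^0(\overline\Q) : h_{A_t}(P_t) \le \epsilon\}$ is finite for some $\epsilon > 0$.

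For the forward direction, suppose $\oM^2 > 0$, i.e.\ $\oM$ is big. Then the essential minimum of $h_{\oM}$ is strictly positive: by Zhang's theorem on successive minima (the inequality $e_1(\oM) \ge \tfrac{\oM^2}{2\,\deg M}$, valid because $\oM$ is nef), there is a proper closed subset $Z\subsetneq T$ outside of which $h_{\oM}(t) > \epsilon$ for some fixed $\epsilon>0$. Since $T$ is a curve, $Z$ is finite, so $\{t : h_{\oM}(t)\le\epsilon\}$ is finite; intersecting with $T^0(\overline\Q)$ and using $h_{A_t}(P_t) = h_{\oM}(t)$ for all but finitely many $t$ gives finiteness of the small-height locus, which is the conclusion of Zhang's conjecture.

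For the converse, suppose $\oM$ is \emph{not} big, so $\oM^2 = 0$. Because $\oM$ is nef with $\oM^2=0$, the essential minimum $e_1(\oM)$ must be zero: if it were positive, one could subtract a small ample constant and stay nef, forcing $\oM^2>0$ by continuity of arithmetic self-intersection, a contradiction. Hence for every $\epsilon>0$ the set $\{t\in T(\overline\Q) : h_{\oM}(t)\le\epsilon\}$ is infinite (indeed Zariski dense, but infinite suffices on a curve). Removing the finitely many $t$ in $T\setminus T^0$ and the finitely many where $h_{A_t}(P_t)\ne h_{\oM}(t)$, we still have an infinite set of $t\in T^0(\overline\Q)$ with $h_{A_t}(P_t)\le\epsilon$, so the conclusion of Zhang's conjecture fails. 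This establishes the equivalence.

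The main technical point to pin down is the equivalence ``$\oM$ big $\iff$ essential minimum positive'' together with ``$\oM^2>0\iff\oM$ big'' in the Yuan--Zhang framework of adelic line bundles over a global field (covering both the number field and function field cases); this rests on the arithmetic volume / Hilbert--Samuel formula and Zhang's inequalities for successive minima for \emph{nef} adelic line bundles, and one must check these apply verbatim to the $\oM$ produced in Theorem~\ref{abelianvarieties}, in particular that $\oM$ is integrable and nef so that $\oM^2$ is defined and the successive-minima inequality $e_1(\oM)\ge \oM^2/(2\deg M)$ holds. Once that is in place, the curve case makes ``proper closed subset'' mean ``finite set'', and the rest is immediate.
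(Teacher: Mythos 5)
Your forward direction coincides with the paper's argument: Zhang's successive-minima inequality bounds the essential minimum $e_1$ of $h_{\oM}$ from below by a positive multiple of $\oM^2/\deg M$, and on a curve the exceptional locus is finite, so $\oM^2>0$ implies the finiteness statement in Zhang's conjecture. The gap is in your converse. The step ``if $e_1>0$ one could subtract a small ample constant and stay nef, forcing $\oM^2>0$ by continuity'' does not work as stated: $e_1>c>0$ only says that $h_{\oM}(t)>c$ outside a \emph{finite} set, and that finite set may well contain points of very small or zero height (for instance parameters where $P_t$ is torsion, or the points of $T\setminus T^0$), so the twist of $\oM$ by $-c$ is in general \emph{not} nef --- nefness of an adelic line bundle requires nonnegativity at every point, not just generically. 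Moreover ``continuity of arithmetic self-intersection'' by itself can never produce a strict inequality; what you would need is an inequality of the form $\oM^2\ge c\,\deg M$ for the twisted bundle, and that is exactly what fails without nefness.

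The repair is already contained in the theorem you cite for the forward direction: Zhang's essential inequalities are two-sided. For nef $\oM$ on the curve $T$ they give (in the paper's normalization)
\[
e_1\ \ge\ \frac{\oM^2}{\deg M}\ \ge\ \frac{e_1}{2},
\]
where the right-hand inequality uses that the second successive minimum is nonnegative because $\oM$ is nef. The right-hand inequality is precisely the converse you are missing: $e_1=0$ forces $\oM^2=0$, equivalently $\oM^2>0$ forces $e_1>0$, with no perturbation argument needed. This is how the paper proves the corollary, after observing that $e_1\ge0$ and that $e_1>0$ is equivalent to the conclusion of Zhang's conjecture. Two smaller remarks: the detour through bigness and the arithmetic Hilbert--Samuel formula is unnecessary, since the corollary is stated directly as $\oM^2>0$ and nefness already gives $\oM^2\ge0$; and by Theorem~\ref{abelianvarieties} the identity $h_{A_t}(P_t)=h_{\oM}(t)$ holds for all $t\in T^0(\overline K)$, so you only need to discard the finitely many points of $T\setminus T^0$, not an additional exceptional set.
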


The author is extremely grateful to Xinyi Yuan, Laura DeMarco, Niki Myrto Mavraki, and Tom Tucker for several ideas and insightful discussion while preparing this paper.



\section{Vector-valued heights}

We begin with an overview of adelic line bundles and vector-valued height functions, in so much as is needed here. For a more complete treatment, see~\cite[Ch. 2]{yz21} and ~\cite[Sec. 2]{Carney2} for more development of the function field setting. 

We will work in two parallel settings The first is for varieties over a field, in which all constructions are purely geometric, and which we will call the \emph{geometric case}. The second is for varieties over $\Spec \Z$, which we will call the \emph{arithmetic case}. Note that when $K$ is a number field, we will require both settings, as the height on $A_{\eta}$ is geometric, while that on each fiber is arithmetic.

\textbf{Geometric case:} Fix any field $k$, and let $\cU$ be a quasi-projective $k$ variety. A \emph{projective model} for $\cU$ is a projective variety $\cX$ along with an open immersion $\cU\hookrightarrow\cX$ defined over $k$. We write $\Div(\cX)$ for the group of Cartier divisors on $\cX$ and $\Prin(\cX)$ for the subgroup of principal Cartier divisors.

\textbf{Arithmetic case:} Let $\cU\to\Spec \Z$ be a quasi-projective arithmetic variety. In this setting, a \emph{projective model} for $\cU$ is a projective arithmetic variety $\cX\to\Spec\Z$ with an open embedding $\cU\to\cX$ over $\Spec\Z$. Write $\widehat\Div(\cX)$ for the group of arithmetic divisors on $\cX$, and $\widehat\Prin(\cX)$ for the subgroup of principal arithmetic divisors. 

In both cases the projective models for $\cU$ form an inverse system via dominating morphisms. Using pullbacks on that system, we define the following, first in the geometric case.
If $\cU\to\Spec k$ is a quasi-projective variety, 
$$\widehat\Div(\cU/k)_{\Mod}:=\lim_{\cU\hookrightarrow\cX}\Div(\cX)_{\Q}\quad\text{and}\quad\widehat\Prin(\cU/k)_{\Mod}:=\lim_{\cU\hookrightarrow\cX}\Prin(\cX)_{\Q}.$$
An element of $\widehat\Div(\cU/k)_{\Mod}$ is called \emph{effective} if it comes from an effective divisor in some $\Div(\cX)_\Q$. 
If $\cU\to\Spec\Z$ is a quasi-projective arithmetic variety, we define
$$\widehat\Div(\cU/\Z)_{\Mod}:=\lim_{\cU\hookrightarrow\cX}\widehat\Div(\cX)_{\Q}\quad\text{and}\quad
\widehat\Prin(\cU/\Z)_{\Mod}:=\lim_{\cU\hookrightarrow\cX}\widehat\Prin(\cX)_{\Q}.$$
An element of $\widehat\Div(\cU/\Z)_{\Mod}$ is called \emph{effective} if it comes from an effective arithmetic divisor in some $\widehat\Div(\cX)_\Q$, meaning that the finite part of the divisor is effective, and the Green's function is non-negative.

Next we define a topology on both of these groups, stemming from effectivity. Since the argument is identical in both the arithmetic and geometric cases we provide it only for the arithmetic case.

Let $\cU$ be a quasi-projective arithmetic variety. Effectivity provides a partial ordering on $\widehat\Div(\cU/\Z)_{\Mod}$. Fix some projective model $\cX_0$ and a strictly effective arithmetic divisor $\overline\cD_0$ on $\cX_0$ such that the support of $\cD_0$ is equal to $\cX_0\backslash\cU$. We call such $\cD_0$ a \emph{boundary divisor}. For $\epsilon\in\Q_{>0}$, define a basis of epsilon balls around $0$ by
$$B(\epsilon,0):=\left\{\overline\cE\in\widehat\Div(\cU/\Z)_{\Mod}:\epsilon\overline\cD_0\pm\overline\cE\text{ are both effective}\right\}.$$
Via translation, this defines a topology on all of $\widehat\Div(\cU)_{\Mod}$, and it is easy to check that this topology does not depend on the choice of $\ocD_0$.

We now define $\widehat\Div(\cU/\Z)$ to be the completion of $\widehat\Div(\cU/\Z)_{\Mod}$ with respect to this topology. $\widehat\Div(\cU/k)$ is defined identically, except without a Green's function attached to each divisor. Finally, define the groups of \emph{adelic line bundles} in each setting:
$$\widehat\Pic(\cU/\Z):=\widehat\Div(\cU/\Z)/\widehat\Prin(\cU/\Z)_{\Mod},$$
$$\widehat\Pic(\cU/k):=\widehat\Div(\cU/k)/\widehat\Prin(\cU/k)_{\Mod}.$$

To further justify notating these as Picard groups, elements of $\widehat\Pic(\cU/k)$ (resp. $\widehat\Pic(\cU/\Z)$) can be represented by sequences $\{\cX_i,\psi_i,\cL_i,\ell_i\}_{i\ge1}$ (resp. $\{\cX_i,\psi_i,\ocL_i,\ell_i\}_{i\ge1}$) where $\cX_i$ is a projective model for $\cU$ with a morphism $\psi_i:\cX_i\to\cX_1$, where $\cL_i$ (resp. $\ocL_i$) is a $\Q$-line bundle (resp. Hermitian $\Q$-line bundle) on $\cX$, and $\ell_i$ is a rational section of $\cL_i\otimes\psi^*\cL_1^{-1}$ with support contained in $\cX_i\backslash\cU$. 
The equality of this representation with the definitions above is shown in~\cite[Lemma 2.5.1]{yz21}.

We will typically just write $\ocL=\{\cX_i,\cL_i\}\in\widehat\Pic(\cU/k)$ or $\ocL=\{\cX_i,\ocL_i\}\in\widehat\Pic(\cU/\Z)$. From the conditions on $\ell_i$, we get a well-defined restriction map $\ocL\mapsto\cL\in\Pic(\cU)_{\Q}$.

Such a sequence is \emph{Cauchy} provided that $\{\div(\ell_i)\}$ (resp. $\{\widehat\div(\ell_i)\}$) is Cauchy under the topology defined above. A sequence converges to zero if there exists a sequence of rational sections $s_i$ of $\cL_i$ such that $\ell_i=s_i\otimes\psi_i^*s_1^{-1}$, and such that $\{\div(s_i)\}$ (resp. $\{\widehat\div(s_i)\}$) is itself Cauchy.

We call an adelic line bundle \emph{nef} if it is isomorphic to a sequence where every (Hermitian) line bundle is nef, and we call an adelic line bundle \emph{integrable} if it can be written as the difference of two nef ones. Denote the cones of nef elements
\[
\widehat\Pic(\cU/k)_{\Nef} \quad\text{ and }\quad \widehat\Pic(\cU/\Z)_{\Nef}
\]
and the subgroups of integrable elements
\[
\widehat\Pic(\cU/k)_{\Int} \quad \text{ and }\quad \widehat\Pic(\cU/\Z)_{\Int}.
\]

\subsection{The relative setting}
From here on out, we unify the notation for the base $k$ and base $\Z$ settings. Let $b$ be either $\Spec\Z$ or $\Spec k$. Let $K$ be a field which is finitely generated over $\Q$, if $b=\Spec\Z$, or finitely generated over $k$, when $b=\Spec k$. 
An \emph{open model} for $K$ is a quasi-projective $b$-variety $\cV$ with function field $K$. 

Next, let $X$ be a quasi-projective variety over $K$. An open model for $X/K$ consists of an open model $\cV$ for $K$, together with a quasi-projective and flat morphism $\cU\to\cV$ whose generic fiber is $X\to \Spec K$.

The open models for $X$ form an inverse system via inclusion. Taking the limit over this system, we define
\[
\widehat\Pic(X/b):=\lim_{\cU\to\cV}\widehat\Pic(\cU/b).
\]
When $X=\Spec K$, we will simply write $\widehat\Pic(K/b)$. Restricting these limits to the nef and integrable elements, we define the nef cone and integrable subgroup
\[
\widehat\Pic(X/b)_{\Nef}\subset\widehat\Pic(X/b)_{\Int}\subset\widehat\Pic(X/b).
\]

We will write elements as $\oL\in\widehat\Pic(X/b)$. 
Given such $\oL=\{\cX_i,\ocL_i\}_{i\ge0}\in\widehat\Pic(X/b)$, we can restrict via base change to
$$\oL_K:=\{\cX_{i,K},\cL_{i,K}\}\in\widehat\Pic(X/K),$$
as each $\cX_{i,K}$ is a projective model for $X$. Since, by construction, all $\cL_{i,K}$ agree on $X\subset\cX_{i,K}$, we have a well defined restriction to $X$ which we denote simply $L\in Pic(X)_{\Q}$.

If $X^0$ is a smooth, quasi-projective $K$ curve, since $X^0$ has a unique projective model $X$ over $K$, 
\begin{equation}\label{curverestriction}
\widehat\Pic(X^0/K)=\Pic(X)_{\R}:=\Pic(X)\otimes_{\Z}\R.
\end{equation}

When $K$ is a number field or a function field of transcendence degree one over $k$ and $X$ is a projective $K$-variety, $\widehat\Pic(X/b)$ corresponds to the group of adelic metrized line bundles, as defined by Zhang~\cite{Z95}. There also exists an analytic theory of adelic line bundles over larger fields, for example~\cite[Sec. 3]{yz21},~\cite[Sec. 2]{Carney2}, but we will not require it here.



\subsection{Pullbacks}
Given a map $f:Y\to X$, since a priori the models defining $\widehat\Pic(Y/b)$ and $\widehat\Pic(X/b)$ may not be compatible, it is not immediately apparent that we can define pullbacks of adelic line bundles. We rectify that.

\begin{lemma}\label{pullbacklemma}
(C.f.~\cite[Sec. 2.5.5]{yz21})
Let $f:Y\to X$ a morphism of quasi-projective varieties which is flat over $b$. 
Then there exists a well-defined pullback morphism
$$f^*:\widehat\Pic(X/b)\to\widehat\Pic(Y/b).$$
\end{lemma}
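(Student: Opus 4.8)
The plan is to reduce the statement to a compatibility/cofinality argument about the inverse systems of models, together with the fact that pullbacks of divisors (arithmetic divisors) along flat morphisms are well-behaved. Let me sketch the key moves.

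First, I would work at the level of a fixed open model $\cV$ for $K$ (or, in the absolute case, directly with projective models). Given $f\colon Y\to X$ flat over $b$, the key point is that the inverse systems of projective models interact well. Given an open model $\cU_X\to\cV$ of $X$ together with a projective model $\cX$ of $\cU_X$, one wants to produce a projective model $\cY$ of a suitable open model $\cU_Y$ of $Y$ together with a morphism $\cY\to\cX$ extending $f$. This follows from standard spreading-out: $f$ itself, being a morphism of $K$-varieties, extends to a morphism of open models $\cU_Y\to\cU_X$ over a possibly smaller $\cV$, and then one compactifies $\cU_Y$ inside some $\cX'$ dominating $\cX$, using Nagata compactification and resolving the rational map to $\cX$ by blowing up (or by taking the closure of the graph); flatness over $b$ is preserved on a dense open, which suffices since open models form a cofinal system under shrinking. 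Thus one gets a cofinal collection of compatible pairs $(\cY,\cX)$ with honest morphisms $\tilde f\colon \cY\to\cX$.

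Second, on each such pair, the classical pullback $\tilde f^*\colon \Div(\cX)_\Q\to\Div(\cY)_\Q$ (resp. $\widehat\Div(\cX)_\Q\to\widehat\Div(\cY)_\Q$ on arithmetic divisors, using that $\tilde f$ is generically smooth / flat so that pullback of Cartier divisors and of Green's functions makes sense) is defined and carries principal divisors to principal divisors and effective divisors to effective divisors. The effectivity-preservation is exactly what is needed to show the induced map on the limits is continuous for the boundary-divisor topology: a boundary divisor $\ocD_0$ on $\cX$ (supported on $\cX\setminus\cU_X$) pulls back to an effective divisor supported on $\cY\setminus\cU_Y$, which is dominated by $\epsilon'$ times a boundary divisor on $\cY$, so $\epsilon$-balls map into $\epsilon'$-balls. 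Hence $\tilde f^*$ passes to the completions, descends modulo principal divisors to $\widehat\Pic(\cU_X/b)\to\widehat\Pic(\cU_Y/b)$, and is compatible with further dominations, so it glues to a map on the inverse limits $\widehat\Pic(X/b)\to\widehat\Pic(Y/b)$.

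Third, I would check well-definedness: independence of the choice of extension $\tilde f$ (two extensions agree after a further dominating model, since they agree on the dense open $\cU_Y$), independence of the chosen models (by cofinality), and functoriality in $f$, which makes the construction canonical. I would also remark that nef and integrable classes are preserved, since pullback of a nef (Hermitian) line bundle along a morphism of projective varieties is nef — this is needed later for the height machinery though not strictly for the statement of the lemma. \textbf{The main obstacle} is the geometric input in the first step: arranging a genuinely cofinal system of \emph{compatible} projective models on both sides with actual morphisms between them, rather than mere rational maps, while preserving flatness over $b$; once that is in hand, the divisor-theoretic and topological parts are routine, and indeed this is why the result is attributed to \cite[Sec. 2.5.5]{yz21}, where the model-theoretic bookkeeping is carried out in detail.
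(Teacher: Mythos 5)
Your skeleton matches the paper's: extend $f$ to a morphism of open models over a shrunken base, produce projective models of the source with honest morphisms to the given models of the target, pull back, and deduce Cauchy-ness from preservation of effectivity together with the independence of the topology from the choice of boundary divisor. The gap is in your second step, where you claim the pullback $\tilde f^*\colon \Div(\cX)_\Q\to\Div(\cY)_\Q$ (and its arithmetic analogue) ``is defined'' because ``$\tilde f$ is generically smooth / flat.'' The hypothesis of the lemma is that $Y$ is flat \emph{over the base} $b$ (this is what keeps $\cU_Y\to\cV$ an open model), not that $f$ is flat or dominant, and in the very applications the lemma is used for in this paper one pulls back along the section $P:T\to\cAo$ and along points $t\in T(\overline K)$: there $\tilde f$ is a closed immersion onto a proper subvariety, so it is not generically flat onto $\cX$ in any sense that helps. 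For such a morphism the pullback of a Cartier divisor whose support contains the image is simply not defined, so a pullback on all of $\Div(\cX)_\Q$ does not exist, and the effectivity-preservation that drives your continuity argument is not justified as stated. Your graph-closure/Nagata construction gives morphisms of models but does nothing to make them flat, so the key analytic input is missing.

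The paper closes exactly this hole with Raynaud--Gruson's flattening theorem: after blowing up along the boundary one may assume each extension $f_i:\cY_i\to\cX_i$ of $f$ is flat, and flat pullback of effective Cartier (arithmetic) divisors is unconditionally defined and effectivity-preserving, after which the boundary-topology argument runs as you describe. An alternative repair, closer to your wording, is to pull back only the line bundles $\ocL_i$ together with the distinguished sections $\ell_i$: since $\div(\ell_i)$ and the boundary divisor are supported in $\cX_i\setminus\cU$, and the image of an integral model $\cY_i$ meets $\cU$, these particular divisors do pull back and stay effective; but then you must say this restriction explicitly rather than assert a pullback of arbitrary divisors. Your third step (independence of the chosen extension and of the models, via agreement on a dense open and cofinality) is fine and is in fact spelled out more carefully than in the paper.
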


\begin{proof}
Assume for simplicity of notation that $b=\Spec\Z$; the only difference when $b=\Spec k$ is the lack of Hermitian metrics. Let $\oL\in\widehat\Pic(X/b)$ be represented by a Cauchy sequence $\{\cX_i,\ocL_i\}$.

By moving to smaller open subvarieties as needed, we may extend $f$ to a morphism $f:\cW\to\cU$ of open models for $X$ and $Y$, and it suffices to prove that there exists a well defined pullback $\widehat\Pic(\cU/b)\to\widehat\Pic(\cW/b)$.

Let $\cY$ be a projective model for $\cW$.
By Raynaud's flattening theorem~\cite{raynaudflattening}, by blowing up along $\cY\backslash\cW$, we may assume that $f:\cW\to\cX_i$ extends to a flat morphism $f_i:\cY_i\to\cX_i$. Replacing $\cY_i$ with a morphism which dominates it, we may further assume that there is a morphism $\psi_i:\cY_i\to\cY_0$ for every $i$, which is the identity on $\cW$.

We now have a compatible system of models on which to pull back $\oL$. Define 
\[
f^*\oL:=\left\{\cY_i,\psi_i,f_i^*\ocL_i,f^*\ell_i\right\}_{i\ge0}.
\]
Since effective Cartier divisors pull back to effective Cartier divisors under both flat and dominant maps, and the topology on $\widehat\Pic(\cW/b)$ does not depend on the choice of boundary divisor, the topology on $\widehat\Pic(\cU/b)$ induces that on $\widehat\Pic(\cW/b)$ via pullback of any boundary divisor on $\cX_0$, and the sequence defining $f^*\oL$ is Cauchy with respect to this topology.

\end{proof}


\section{Canonical heights}

\begin{definition}
Let $X$ be a quasi-projective $K$ variety, with $K$ and $b$ as in the previous section. Let $\oL\in\widehat\Pic(X/b)_{\Int}$, and suppose that $L\in\Pic(X)$ is ample. Let $P\in X(K)$. We define the \emph{vector valued $\oL$-height} of $P$ to be
$$\vh_{\oL}(P):=P^*\oL\in\widehat\Pic(K/b)_{\Int}.$$
If $P$ is defined over some finite extension $K'$, we compose with the map $\widehat\Pic(K'/b)_{\Nef}\to\widehat\Pic(K/b)_{\Nef}$ induced by the norm functor $N_{K'/K}$ and scale the result by $1/[K':K]$, so that this is a relative height which does not depend on the choice of $K'$. For simplicity, we will usually omit this formalism and assume without loss of generality that $P$ is defined over $K$.
\end{definition}

When $K$ is a number field or a transcendence degree one function field over $k$, taking the limit of the arithmetic degrees produces a Weil height~\cite{Z95}, which we notate
\[
h_{\oL}(P):=\widehat\deg \left(\vh_{\oL}(P)\right),
\]
and thus vector-valued heights extend the theory of $\R$-valued heights. 



\begin{lemma}\label{tateslimiting}
(C.f.~\cite[Sec. 6.1.1]{yz21},~\cite[Sec. 4.1]{Carney2})
Let $f:X\to X$ be an endomorphism of projective varieties over $K$ with a \emph{polarization}, i.e. an ample $\Q$-line bundle $L\in\Pic(X)_{\Q}$ and a rational number $q>1$ such that $f^*L=qL$. Then there exists a nef adelic line bundle $\oL_f\in\widehat\Pic(X/b)_{\Nef}$ extending $L$ such that 
$$f^*\oL_f=q\oL_f.$$
In particular, there exists a canonical height $\vh_f:=\vh_{\oL_f}$ such that for all $P\in X(\overline K)$,
$$\vh_f(f(P))=q\vh_f(P).$$
\end{lemma}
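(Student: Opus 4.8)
The plan is to mimic Tate's classical telescoping construction, but one level down in the geometry: instead of producing a real-valued height function as a uniform limit, I will produce an actual element of $\widehat\Pic(X/b)_{\Nef}$ as a limit in the topology defined in Section 2. First I would fix any adelic line bundle $\oL_0 \in \widehat\Pic(X/b)$ extending $L$ — for instance, pick a projective model $\cX_0$ of $X$ over an open model $\cV_0$ of $K$ together with a relatively ample $\Q$-line bundle $\ocL_0$ restricting to $L$ on $X$; such a model exists by standard spreading-out, and by further shrinking/blowing up we may assume (as in Lemma \ref{pullbacklemma}) that $f$ spreads out to a flat endomorphism $f_0$ of $\cX_0$ over a base, and inductively that $f$ spreads out to flat maps $f_i$ on a compatible tower of models $\cX_i$. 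Then set
\[
\oL_f := \lim_{n\to\infty} \frac{1}{q^n}(f^n)^*\oL_0,
\]
where $(f^n)^*$ is the iterated pullback furnished by Lemma \ref{pullbacklemma}, and the limit is taken in $\widehat\Pic(X/b)$.

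The key steps, in order, are: (1) verify the sequence $\oL_n := q^{-n}(f^n)^*\oL_0$ is Cauchy in the topology on $\widehat\Pic(X/b)$; (2) conclude it converges, since $\widehat\Pic(\cU/b)$ was defined as a completion and $\widehat\Pic(X/b)$ as an inverse limit of such; (3) check the limit $\oL_f$ restricts to $L$ on $X$ and satisfies $f^*\oL_f = q\oL_f$, which is formal from $f^*\oL_n = q\,\oL_{n+1}$ and continuity of $f^*$; (4) check nefness; and (5) deduce the height identity by applying $P^*$ and using functoriality $P^* f^* = f(P)^*$ together with homogeneity. For step (1), the difference $\oL_{n+1}-\oL_n = q^{-n-1}(f^n)^*(f^*\oL_0 - q\oL_0)$, and $f^*\oL_0 - q\oL_0$ restricts to $0$ on $X$, so it is represented by a boundary class; pulling back by flat dominant maps preserves effectivity (as used in Lemma \ref{pullbacklemma}), so $\pm(f^*\oL_0 - q\oL_0)$ is bounded by $\epsilon_0 \oL_0$-type data against a fixed boundary divisor, and the factor $q^{-n-1}$ with $q>1$ forces geometric decay, giving the Cauchy property. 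For step (4), each $\oL_n$ is a positive multiple of a pullback of the relatively ample (hence nef) $\ocL_0$ along a flat morphism, so each $\oL_n$ is nef, and the nef cone is closed; alternatively one invokes that $\widehat\Pic(X/b)_{\Nef}$ is defined precisely so as to contain such limits.

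The main obstacle I anticipate is step (1), specifically making rigorous that $f^*\oL_0 - q\oL_0$ lies in a bounded neighborhood of $0$ uniformly, i.e. that there is a single $\epsilon_0$ and a single boundary divisor $\ocD_0$ on some fixed model with $\epsilon_0\ocD_0 \pm (f^*\oL_0 - q\oL_0)$ effective, so that after pulling back by $f^n$ and scaling we land in $B(\epsilon_0 q^{-n}, 0)$. This requires care because $f^*\oL_0$ and $q\oL_0$ a priori live on different models and must first be reconciled on a common dominating model (where $f$ is flat, via Raynaud flattening as in Lemma \ref{pullbacklemma}); once reconciled there, their difference is supported on the boundary because it vanishes on $X$, and then one bounds it by the chosen boundary divisor using that the boundary is, up to scaling, cofinal among effective divisors supported off $X$. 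The secondary subtlety is bookkeeping the inverse-limit-of-completions structure of $\widehat\Pic(X/b)$: one must check the Cauchy sequence is compatible across the tower of open models $\cV$, which follows because the construction is natural in the model and pullback commutes with the transition maps. Everything else — the functional equation, nefness, and the passage to the $\R$-valued canonical height $\vh_f(f(P)) = q\,\vh_f(P)$ via $P^*$ — is then formal, using the already-established properties of $P^*$ and the norm-functor normalization from the definition of $\vh_{\oL}$.
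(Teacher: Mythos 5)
Your overall architecture matches the paper's: run Tate's telescoping construction at the level of models to get a Cauchy sequence for the boundary topology, then deduce nefness, the eigen-equation $f^*\oL_f=q\oL_f$, and the height identity formally. However, the step you yourself flag as the main obstacle --- the Cauchy estimate in your step (1) --- has a genuine gap, in two places. First, the assertion that $f^*\oL_0-q\oL_0$ ``restricts to $0$ on $X$, so it is represented by a boundary class'' is not correct as stated: $X$ is the generic fiber of $\cU\to\cV$, so triviality on $X$ only forces the difference to be represented by a divisor supported away from the generic fiber, i.e.\ a vertical divisor lying over a proper closed subset of the base --- which may well contain fibers over points of $\cV$, not just the boundary. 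Such classes are not small in the boundary topology: if $\epsilon\ocD_0\pm\overline\cE$ are both effective then the support of $\cE$ is forced into the support of $\cD_0\subset\cX_0\setminus\cU$. The paper's first move exists precisely to remove this: one shrinks $\cV$ and chooses the open model so that the relation $f^*\cL=q\cL$ holds on all of $\cU$, not merely on $X$; only then is the level-zero discrepancy genuinely supported on the boundary.

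Second, and more seriously, your geometric-decay claim needs $q^{-n}(f^n)^*$ of the level-zero discrepancy to be bounded by $\epsilon q^{-n}$ times a \emph{fixed} boundary divisor. Pullback by flat or dominant maps preserves effectivity, but that only bounds $(f^n)^*\overline\cE$ by $(f^n)^*\ocD_0$, not by $\ocD_0$; your appeal to cofinality of the boundary divisor produces, for each $n$, a comparison constant $c_n$ with no control as $n\to\infty$, and if $c_n$ grows comparably to $q^n$ the Cauchy property fails. The paper resolves this by taking the boundary divisor to be a pullback $\pi^*\ocD_0$ of a divisor on the base $\cB$ supported on $\cB\setminus\cV$ (after shrinking so this is Cartier); since the maps $\tau_m$ extending $f^m$ to the projective models are compatible with the projections to $\cB$, one has $\tau_m^*\pi^*\ocD_0=\pi^*\ocD_0$ on the nose, so the level-$m$ discrepancy $\phi_m^*\ocL_m-\rho_m^*\ocL_{m+1}=q^{-m}\tau_m^*(\phi_0^*\ocL_0-\rho_0^*\ocL_1)$ lands in $B(q^{-m},0)$ with constant exactly $1$. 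Without some such invariance (or uniform boundedness) of the boundary divisor under the extended dynamics, your step (1) does not go through. The remaining steps --- nefness via pulling back a nef model extension and closedness of the nef cone in the limit, the eigen-equation by continuity, and $\vh_f(f(P))=q\,\vh_f(P)$ by applying $P^*$ --- are fine and agree with the paper.
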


\begin{proof}
We again prove this only in the base $\Z$ case, as the base $k$ case is identical besides the lack of Hermitian metrics. To start, we can find an open model $\cU\to\cV$ over $b$ for $X\to\Spec K$ on which $f$ extends to a $\cV$-morphism $f:\cU\to\cU$, and over which there exists an extension $\cL\in\Pic(\cU)_{\Q}$ of $L$ such that $f^*\cL=q\cL$. Fix a projective model $\cX_0\to \cB$ for $\cU\to\cV$, and a nef extension $\ocL_0\in\widehat\Pic(\cX_0)_{\Q}$ of $\cL$. By shrinking $\cV$ if needed, we may assume $\cB\backslash\cV$ is an effective Cartier divisor.


Write $f^m$ to mean the $m$-th iterate of $f$. For each $m\ge1$, define $f_m:\cX_m\to\cX_0$ to be the normalization of the composition $X\xrightarrow{f^m}X\hookrightarrow\cX_0$, and define 
$$\ocL_m:=\frac1{q^m}f_m^*\ocL_0.$$

For each $m\ge0$, let $\cX_m'$ be a projective model for $\cU$ which dominates both $\cX_m$ and $\cX_{m+1}$ via $\phi_m:\cX_m'\to\cX_m$ and $\rho_m:\cX_m'\to\cX_{m+1}$, both of which restrict to the identity on $\cU$. Further assume that for $m\ge1$, there exists a morphism $\tau_m:\cX_m'\to\cX_0'$ which commutes with $\phi_m,$ $\rho_m$ and $f_m$. By construction $\tau_m$ extends $f^m$ on $\cU$. 
Since $\phi_0^*\ocL_0-\rho_0^*\ocL_1$ is trivial on $\cU$, we can find a boundary divisor $\ocD_0$ supported on $\cB\backslash\cV$ such that for the topology defined by $\pi^*\ocD_0$, we have $\phi_0^*\ocL_0-\rho_0^*\ocL_1\in B(1,0)$. Now
\begin{equation*}\label{tateslimitingequation}
\phi_m^*\ocL_m-\rho_m^*\ocL_{m+1}=q^{-m}\tau_m^*\left(\phi_0^*\ocL_0-\rho_0^*\ocL_1\right).
\end{equation*}
and thus
\[
\phi_m^*\ocL_m-\rho_m^*\ocL_{m+1}\in B\left(q^{-m},0\right).
\]
We get a Cauchy sequence
\[
\oL_f:=\left\{\cX_m,\ocL_m\right\}_{m\ge0}\in\widehat\Pic(X/b)_{\Nef},
\]
which is easily seen to satisfy the desired conditions.
\end{proof}


\section{Proof of Theorem~\ref{abelianvarieties}}


Let $K$ be a number field and $b=\Spec\Z$, or let $K$ be a transcendence degree one function field with constant field $k$, and $b=\Spec k$. Let $T^0$ be a smooth quasiprojective curve over $K$, and let $F=K(T^0)$ be its function field. Let $A$ be a family of abelian varieties over $T^0$, and let $T$ be the unique smooth projective curve with function field $F$. 

Write $\vh_{A}=\vh_{[2]}$ for the $\widehat\Pic(F/K)_{\Int}$-valued canonical height on the generic fiber $A_{\eta}$, and on each fiber $A_t$ over $t\in T^0(\overline K)$, write $\vh_{A_t}$ for the $\widehat\Pic(K/b)_{\Int}-$valued canonical height on $A_t$. As defined in the previous section, the usual $\R$-valued N\'eron-Tate canonical heights are 
$$h_A(P)=\widehat\deg\left(\vh_A(P)\right),\qquad h_{A_t}(P_t)=\widehat\deg\left(\vh_{A_t}(P_t)\right)$$

We show that if $h_{A}(P)\ne0$, there exists a nef adelic line bundle $\oM\in\widehat\Pic(K/b)_{\Nef}$ such that on the $\overline K$-points of $T$ over which $A$ has good reduction, the function 
$$t\mapsto \vh_{A_t}(P_t)$$
is equal to a vector-valued height function $\vh_{\oM}$, defined on all of $T$.




\begin{proof}
Let $\cA\to T$ be the N\'eron model for $A_{\eta}$. This is a quasiprojective $K$-variety. Replacing $F$, and thus $T$, with a finite extension will not alter the result, as the heights involved are invariant under finite extensions. Thus, we may assume that $\cA\to T$ has semiabelian reduction~\cite[Ch. 7.4, Thm 1]{RaynaudBook}. We also write $P:T\to\cA$ for the unique section extending $P\in A(F)$, by the N\'eron mapping property.

Since $\cA$ has abelian reduction at all but finitely many fibers, and the bad fibers have finitely many components, by replacing the point $P$ with $nP$ if needed, we may assume that the section $P$ is contained in the connected component of the identity, which we denote $\cA^{\circ}$. This replacement scales the canonical height on both $A$ and on each abelian fiber $A_t=\cA_t$ by $n^2$, and thus does not affect the result.

Let $\cX\to b$ be a projective model for $\cAo$ over $b$. Fix a symmetric and ample line bundle $L$ on $A_{\eta}$, and extend this to a nef line bundle $\cL$ on $\cX$. By applying the normalization construction from the proof of Theorem~\ref{tateslimiting}, we can find a second projective model $\cX'\to b$ with a morphism
\[
[-1]:\cX'\to\cX
\]
extending the automorphism $[-1]$ on $\cAo$. Fix a third model $\cX_0$ dominating both $\cX$ and $\cX'$. We can then define a nef line bundle
\[
\cL_0=\frac12\left(\cL+[-1]^*\cL\right)\in\Pic(\cX_0)_{\Q},
\]
where we use the implied pullbacks from $\cX$ and $\cX'$ to $\cX_0$.

We adapt an argument of Green~\cite{Green89}. Consider the difference $[n]^*\cL_0|_{\cAo}-n^2\cL_0|_{\cAo}$. By the theorem of the cube, this becomes trivial when restricted to $A_{\eta}=\cA_{\eta}$. Then, since $\cAo$ is smooth and has integral fibers over $T$, this difference is equal to the pullback under $\cAo\to T$ of a line bundle on $T$, by~\cite[Cor. 21.4.13 of Ch. 4, Errata et Addenda]{EGA4}. Finally, pulling back by the identity section $E:T\to\cAo$, we see that $[n]^*\cL_0|_{\cAo}-n^2\cL_0|_{\cAo}$ must be trivial. Thus, 
\[
[n]^*\cL_0|_{\cAo}=n^2\cL_0|_{\cAo}.
\]
\begin{remark}
For a different version of the above argument extending a polarization to $\cAo$, one can use Moret-Bailly's extension of cubical structure,~\cite[Thm. 3.5]{MoretBailly}, as in~\cite[Sec. 4]{Kunnemann}.
\end{remark}




Next, apply Lemma~\ref{tateslimiting} to extend $L$ to a nef adelic line bundle $\oL=\{\cX_m,\ocL_m\}\in\widehat\Pic(A/b)_{\Nef}$ such that
$$[2]^*\oL=4\oL.$$
In general, the model adelic line bundles $\ocL_m$ may all differ along a boundary divisor. Using the above construction extending the polarization to $\cAo$, however, the line bundles $\cL_{m,K}$ can be made to all agree on $\cAo$. Since the section $P$ lands fully within $\cAo$, we have a well-defined pullback
\[
\oM=P^*\oL\in\widehat\Pic(T/b)_{\Int},
\]
since each $P^*(\cL_{m,K})$ is the same line bundle on $T$.

By construction, $\vh_{\oM}(t)=t^*\oM=P_t^*\oL_t=\vh_{A_t}(P_t)$ for all $t\in T^0(\overline K)$. Since $T$ is a curve over $K$, the $\R$-valued N\'eron-Tate height of $P$ is simply $\deg(M)=\deg((P^*\oL)_K)\in\Q$. Since $P$ does not have height zero, we conclude that $M$ is ample, and that $\oM$ is nef, as the pullback of a nef adelic line bundle.
\end{proof}

\begin{remark}
Without the N\'eron model, which exists for abelian varieties, but not in general for dynamical systems (see for example~\cite{Hsia96}), the above construction would instead produce only $\oM\in\widehat\Pic(T^0/b)$. Since $T^0$ has a unique projective closure, this gives an $\R$-line bundle $M\in\Pic(T)_{\R}$, but it remains an open question whether the adelic structure extends over all of $T$, i.e. whether the metrics defined by each $\ocL_m$ converge to an adelic metric on $T$.

\end{remark}

\subsection{Proof of Corollary~\ref{Bigness}}

Since $\oM$ is nef, we know that $\oM^2\ge0$. Define
\[
e_1:=\sup_{\text{open }U\subset T}\inf_{t\in U(\overline K)}h_{\oM}(t).
\]
It can easily be seen that $e_1\ge0$, and is positive if and only if Zhang's conjecture holds. 

By Zhang's essential inequalities~\cite[Thm 5.2]{Zhang95a},
\[
e_1\ge\frac{\oM^2}{\deg M}\ge\frac{e_1}2.
\]
From this, the equivalence of the restatement of the conjecture in terms of the bigness of $\oM$ is immediate.




\begin{thebibliography}{BHdJ17}

\bibitem[BHdJ17]{deJong}
Owen Biesel, David Holmes, and Robin de~Jong.
\newblock N\'{e}ron models and the height jump divisor.
\newblock {\em Trans. Amer. Math. Soc.}, 369(12):8685--8723, 2017.

\bibitem[BLR90]{RaynaudBook}
Siegfried Bosch, Werner L\"{u}tkebohmert, and Michel Raynaud.
\newblock {\em N\'{e}ron models}, volume~21 of {\em Ergebnisse der Mathematik
  und ihrer Grenzgebiete (3) [Results in Mathematics and Related Areas (3)]}.
\newblock Springer-Verlag, Berlin, 1990.

\bibitem[Cal89]{Call87}
Gregory~S. Call.
\newblock Variation of local heights on an algebraic family of abelian
  varieties.
\newblock In {\em Th\'{e}orie des nombres ({Q}uebec, {PQ}, 1987)}, pages
  72--96. de Gruyter, Berlin, 1989.

\bibitem[Car20]{Carney2}
A.~Carney.
\newblock Heights and arithmetic dynamics over finitely generated fields,
  preprint.
\newblock {\em arXiv e-prints}, page arXiv:2010.07200, October 2020.

\bibitem[Con06]{conradtrace}
Brian Conrad.
\newblock Chow's {$K/k$}-image and {$K/k$}-trace, and the {L}ang-{N}\'eron
  theorem.
\newblock {\em Enseign. Math. (2)}, 52(1-2):37--108, 2006.

\bibitem[DM20]{DeMM}
Laura DeMarco and Niki~Myrto Mavraki.
\newblock Variation of canonical height and equidistribution.
\newblock {\em Amer. J. Math.}, 142(2):443--473, 2020.

\bibitem[Gre89]{Green89}
William Green.
\newblock Heights in families of abelian varieties.
\newblock {\em Duke Math. J.}, 58(3):617--632, 1989.

\bibitem[Gro67]{EGA4}
A.~Grothendieck.
\newblock {\em \'{E}l\'{e}ments de g\'{e}om\'{e}trie alg\'{e}brique. {IV}.
  \'{E}tude locale des sch\'{e}mas et des morphismes de sch\'{e}mas {IV}}.
\newblock Number~32. 1967.

\bibitem[Hsi96]{Hsia96}
Liang-Chung Hsia.
\newblock A weak {N}\'{e}ron model with applications to {$p$}-adic dynamical
  systems.
\newblock {\em Compositio Math.}, 100(3):277--304, 1996.

\bibitem[K\"98]{Kunnemann}
Klaus K\"{u}nnemann.
\newblock Projective regular models for abelian varieties, semistable
  reduction, and the height pairing.
\newblock {\em Duke Math. J.}, 95(1):161--212, 1998.

\bibitem[MB85]{MoretBailly}
Laurent Moret-Bailly.
\newblock Pinceaux de vari\'{e}t\'{e}s ab\'{e}liennes.
\newblock {\em Ast\'{e}risque}, (129):266, 1985.

\bibitem[RG71]{raynaudflattening}
Michel Raynaud and Laurent Gruson.
\newblock Crit\`eres de platitude et de projectivit\'{e}. {T}echniques de
  ``platification'' d'un module.
\newblock {\em Invent. Math.}, 13:1--89, 1971.

\bibitem[Sil82]{SilvermanThesis}
Joseph~Hillel Silverman.
\newblock {\em T{he} {Neron}-{Tate} {Height} {On} {Elliptic} {Curves}}.
\newblock ProQuest LLC, Ann Arbor, MI, 1982.
\newblock Thesis (Ph.D.)--Harvard University.

\bibitem[Sil94]{Silverman94}
Joseph~H. Silverman.
\newblock Variation of the canonical height on elliptic surfaces. {III}.
  {G}lobal boundedness properties.
\newblock {\em J. Number Theory}, 48(3):330--352, 1994.

\bibitem[Tat83]{Tate83}
J.~Tate.
\newblock Variation of the canonical height of a point depending on a
  parameter.
\newblock {\em Amer. J. Math.}, 105(1):287--294, 1983.

\bibitem[YZ21]{yz21}
Xinyi Yuan and Shou-Wu Zhang.
\newblock Adelic line bundles over quasi-projective varieties, 2021, preprint.
\newblock {\em arXiv e-prints}, page arXiv:2105.13587, August 2021.

\bibitem[Zha95a]{Zhang95a}
Shouwu Zhang.
\newblock Positive line bundles on arithmetic varieties.
\newblock {\em J. Amer. Math. Soc.}, 8(1):187--221, 1995.

\bibitem[Zha95b]{Z95}
Shouwu Zhang.
\newblock Small points and adelic metrics.
\newblock {\em J. Algebraic Geom.}, 4(2):281--300, 1995.

\bibitem[Zha98]{ZhangConj}
Shou-Wu Zhang.
\newblock Small points and {A}rakelov theory.
\newblock In {\em Proceedings of the {I}nternational {C}ongress of
  {M}athematicians, {V}ol. {II} ({B}erlin, 1998)}, number Extra Vol. II, pages
  217--225, 1998.

\end{thebibliography}

\end{document}